\newcolumntype{Y}{>{\centering\arraybackslash}X}
\renewcommand\arraystretch{2}
\newtheorem{thm}{Theorem}[section]
\newtheorem{corollary}[thm]{Corollary}
\newtheorem{lemma}[thm]{Lemma}
\newtheorem{proposition}[thm]{Proposition}
\newtheorem{maintheorem}[thm]{Main Theorem}
\newtheorem{remark}[thm]{Remark}
\newtheorem{property}[thm]{Property}
\newcommand{\BIG}{\bBigg@{2}}
\newcommand{\vast}{\bBigg@{3}}
\newcommand{\Vast}{\bBigg@{5}}
\numberwithin{equation}{section}
\begin{document}

%%%%% To ease editing, for IMPAN journals add:

\baselineskip=17pt

%%%%%%%%%%%

%% In the running head, replace first names by initials
%% and give an abbreviation of the title.

\title[$(\sigma,\tau)$-Derivations of Group Rings]{$\boldsymbol{(\sigma,\tau)}$-Derivations of Group Rings}
\author[D.Chaudhuri]{Dishari Chaudhuri}
\address{Dishari Chaudhuri\\Department of Mathematical Sciences\\ Indian Institute of Science Education and Research Mohali\\
Sector-81, Knowledge City, S.A.S. Nagar, Mohali-140306\\ Punjab, India}
\email{dishari@iisermohali.ac.in, dishari@iitg.ac.in}

%\date\today

\begin{abstract} We study $(\sigma,\tau)$-derivations of a group ring $RG$ of a finite group $G$ over an integral domain $R$ with $1$. As an application we extend a well known result on derivation of an integral group ring $\Bbb{Z}G$ to $(\sigma,\tau)$-derivation on it for a finite group $G$ with some conditions on $\sigma$ and $\tau$. In the process of the extension, a generalization of an application of Skolem-Noether Theorem to derivation on a finite dimensional central simple algebra has also been given for the $(\sigma,\tau)$-derivation case.\\

 \noindent Keywords: Group Rings, Derivations.\\

 \noindent 2010 Mathematics Subject Classification: 16S34; 16W25.

 %We also give an interesting observation on the derived length of a group algebra of a group of odd order over fields of certain characteristics.
\end{abstract}

%\keywords{Group algebras, units, derived subgroups}
%
%\subjclass[2010]{Primary 16S34; Secondary 16U60 }

\maketitle

%%%%%%%%%%%%%%%%%%%%%%%%%%%%%%%%%%SECTION 1%%%%%%%%%%%%%%%%%%%%%%%%%%%%%%%%%%%%%%%%%%%%%%%%%%%%%%%
\section{Introduction}
Let $R$ be a commutative ring with $1$ and $\mathcal{A}$ be an algebra over $R$. A derivation on $\mathcal{A}$ is an $R$-linear map $\gamma: \mathcal{A}\rightarrow\mathcal{A}$ satisfying $\gamma(ab)=\gamma(a)b+a\gamma(b)$ for all $a,b\in\mathcal{A}$. For $x\in\mathcal{A}$, the derivation $\gamma$ such that $\gamma(a)=xa-ax$ for all $a\in\mathcal{A}$ is called an inner derivation of $\mathcal{A}$ coming from $x$. We will denote such a derivation and inner derivation as $\gamma^{usual}$ and $\gamma_x^{inner}$ respectively. Let $\sigma$, $\tau$ be two different algebra endomorphisms on $\mathcal{A}$. A $(\sigma,\tau)$-derivation on $\mathcal{A}$ is an $R$-linear map $\delta$ satisfying $\delta(ab)\;=\; \delta(a)\tau(b)+\sigma(a)\delta(b)$ for $a,b\in \mathcal{A}$. If $x\in\mathcal{A}$, the $(\sigma,\tau)$-derivation $\delta_x:\mathcal{A}\rightarrow\mathcal{A}$ such that $\delta_{x}(a)=x\tau(a)-\sigma(a)x$, is called a $(\sigma,\tau)$-inner derivation of $\mathcal{A}$ coming from $x$. If $\sigma=\tau=id$, then $\delta$ and $\delta_x$ are respectively the usual derivation $\delta^{usual}$ and inner derivation $\delta_x^{inner}$ of $\mathcal{A}$ coming from $x$. These kinds of derivations were mentioned by Jacobson in \cite{J} (Chapter $7.7$). Later on they have been studied extensively in the case of prime and semiprime rings by many authors. A brief history on such cases can be found in \cite{AAH-06}. Generalized Witt algebras have been studied with the help of $(\sigma,\tau)$-derivations in \cite{HLS-06}, which also contains a study of $(\sigma,\tau)$-derivations on commutative algebras and unique factorization domains. We however are interested in the $(\sigma,\tau)$-derivations of group rings. The usual derivation for integral group rings was studied by Spiegel (Theorem $1$, \cite{Sp-94}). He has shown that for a finite group $G$, every derivation of $\mathbb{Z}G$ is inner. We extend that result to $(\sigma,\tau)$-derivations on $\mathbb{Z}G$ over a finite group $G$ with certain conditions on $\sigma$ and $\tau$. The result we have obtained is for a general group ring $RG$ of a finite group $G$ over any integral domain $R$ with $1$. The integral group ring case then is obtained as a corollary of the main result. Let $\mathcal{Z}(\mathcal{A})$ denote the center of the algebra $\mathcal{A}$. Our main result can be stated as follows:

\begin{maintheorem}\label{main}
 Let $G$ be a finite group and $R$ be an integral domain with $1$ with characteristic $p\geq0$ such that $p$ does not divide the order of $G$.
  \begin{enumerate}[$(i)$]
  \item If $R$ is a field and $\sigma$, $\tau$ are algebra endomorphisms of $RG$ such that they fix $\mathcal{Z}(RG)$ elementwise, then every $(\sigma,\tau)$-derivation of $RG$ is $(\sigma,\tau)$-inner.
  \item If $R$ is an integral domain that is not a field and $\sigma,\;\tau$ are $R$-linear extensions of group homomorphisms of $G$ such that they fix $\mathcal{Z}(RG)$ elementwise, then every $(\sigma,\tau)$-derivation of $RG$ is $(\sigma,\tau)$-inner.
  \end{enumerate}
\end{maintheorem}

Our manuscript has been divided into four sections. The second section contains some properties and interesting results on $(\sigma,\tau)$-derivations. We have provided a generalized corollary of Skolem-Noether Theorem for the $(\sigma,\tau)$-derivation case. The third section is devoted to the proof of our main theorem. In the conclusion part we have mentioned how our result can be applied to $\Bbb{Z}G$.

%\begin{definition}\label{sigma-tau}A $(\sigma,\tau)$-derivation on $\mathcal{A}$ is an $R$-linear map $\delta$ satisfying
%\begin{equation*}
%\delta(ab)\;=\; \delta(a)\tau(b)+\sigma(a)\delta(b)
%\end{equation*}
%for $a,b\in \mathcal{A}$. The set of all $(\sigma,\tau)$-derivations on $\mathcal{A}$ is denoted by $\mathfrak{D}_{(\sigma,\tau)}(\mathcal{A})$. If $\sigma=\tau=id$, then $D$ is the usual derivation on $\mathcal{A}$.
%\end{definition}

%\begin{definition}\label{sigma}A $\sigma$-derivation on $\mathcal{A}$ is a $(\sigma, id)$-derivation, that is, an $R$-linear map $\delta$ satisfying
%\begin{equation*}
%\delta(ab)\;=\; \delta(a)b+\sigma(a)\delta(b)
%\end{equation*}
%for $a,b\in \mathcal{A}$. The set of all $(\sigma,\tau)$-derivations on $\mathcal{A}$ is denoted by $\mathfrak{D}_{\sigma}(\mathcal{A})$.
%\end{definition}

%%%%%%%%%%%%%%%%%%%%%%%%%%%% SOME USEFUL RESULTS %%%%%%%%%%%%%%%%%%%%%%%%%%%%%%%%%%%%%%%%%

\section{Useful Results}

The following are some interesting properties of the $(\sigma,\tau)$- derivations which follow directly from the definition. The set of all $(\sigma,\tau)$-derivations on $\mathcal{A}$ will be denoted by $\mathfrak{D}_{(\sigma,\tau)}(\mathcal{A})$.

\begin{property}If $\mathcal{A}$ is unital, then for any $(\sigma,\tau)$-derivation $\delta$, $\delta(1)=0$.
\end{property}

\begin{property}$\mathfrak{D}_{(\sigma,\tau)}(\mathcal{A})$ is an $R$-module as $\delta_1+\delta_2,\;r\delta_1\in\mathfrak{D}_{(\sigma,\tau)}(\mathcal{A})$ for $\delta_1,\delta_2\in\mathfrak{D}_{(\sigma,\tau)}(\mathcal{A})$ and $r\in R$.
\end{property}

\begin{property}When $\sigma(x)a=a\sigma(x)$ $(\text{or } \tau(x)a=a\tau(x))$ for all $x,a\in\mathcal{A}$, and in particular when $\mathcal{A}$ is commutative, $\mathfrak{D}_{(\sigma,\tau)}(\mathcal{A})$ carries a natural left (or right) $\mathcal{A}$-module structure by $(a,\delta)\longmapsto a.\delta:\;x\mapsto a\delta(x)$.
\end{property}

%\begin{definition}If $x\in\mathcal{A}$, the $(\sigma,\tau)$-derivation $\delta_x:\mathcal{A}\rightarrow\mathcal{A}$ such that $\delta_{x}(a)=x\tau(a)-\sigma(a)x$, is called a $(\sigma,\tau)$-inner derivation of $\mathcal{A}$ coming from $x$. If $\sigma=\tau=id$, then $\delta_x=\delta^{inn}_x$ is the usual inner derivation of $\mathcal{A}$ coming from $x$.
%\end{definition}

\begin{property}\label{inner sum}For $x,y\in\mathcal{A}$, the $(\sigma,\tau)$-inner derivations satisfy: $\delta_{x+y}=\delta_x+\delta_y$.
\end{property}

\begin{property}\label{sigma inner commutator}For $(\sigma,\tau)$-inner derivations $\delta_x, \delta_y$ for some $x,y\in\mathcal{A}$, $\delta_x=\delta_y$ if and only if $(x-y)\tau(a)=\sigma(a)(x-y)$ for all $a\in\mathcal{A}$.
\end{property}

The following lemma will be useful for the proof of the main theorem.

\begin{lemma}\label{induction}
Let $\sigma$ and $\tau$ be algebra homomorphisms on $\mathcal{A}$ that fix $\mathcal{Z}(\mathcal{A})$ elementwise. Then for a $(\sigma,\tau)$- derivation $\delta$ on $\mathcal{A}$, we have $\delta\left(\alpha^n\right)=n\alpha^{n-1}\delta(\alpha)$ for all $\alpha\in\mathcal{Z}(\mathcal{A})$.
\end{lemma}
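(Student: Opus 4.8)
The plan is to proceed by induction on $n$, using the defining Leibniz-type identity for $\delta$ together with the two standing hypotheses: that $\alpha\in\mathcal{Z}(\mathcal{A})$ forces every power $\alpha^k\in\mathcal{Z}(\mathcal{A})$, and that $\sigma$ and $\tau$ fix $\mathcal{Z}(\mathcal{A})$ pointwise, so $\sigma(\alpha^k)=\tau(\alpha^k)=\alpha^k$ for all $k\geq 1$.

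For the base case $n=1$ the statement reads $\delta(\alpha)=\alpha^0\delta(\alpha)=\delta(\alpha)$, which is immediate (using that $\mathcal{A}$ is unital, by Property 2.1 the expression $\alpha^0=1$ causes no trouble). Assume the identity holds for some $n\geq 1$. Then I would expand
\[
\delta\!\left(\alpha^{n+1}\right)=\delta\!\left(\alpha^{n}\cdot\alpha\right)=\delta\!\left(\alpha^{n}\right)\tau(\alpha)+\sigma\!\left(\alpha^{n}\right)\delta(\alpha).
\]
Since $\alpha\in\mathcal{Z}(\mathcal{A})$ and hence $\alpha^{n}\in\mathcal{Z}(\mathcal{A})$, the hypothesis on $\sigma,\tau$ gives $\tau(\alpha)=\alpha$ and $\sigma(\alpha^{n})=\alpha^{n}$, so the right-hand side becomes $\delta(\alpha^{n})\alpha+\alpha^{n}\delta(\alpha)$. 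Applying the induction hypothesis $\delta(\alpha^{n})=n\alpha^{n-1}\delta(\alpha)$ and using centrality of $\alpha$ once more to slide the factor $\alpha$ past $\delta(\alpha)$, this equals $n\alpha^{n}\delta(\alpha)+\alpha^{n}\delta(\alpha)=(n+1)\alpha^{n}\delta(\alpha)$, completing the induction.

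There is no real obstacle here; the only point that needs care is to invoke both halves of the hypothesis at the right moment — that $\tau$ (resp. $\sigma$) fixes the central element $\alpha$ (resp. $\alpha^n$), and that centrality of $\alpha$ lets us commute $\alpha$ with $\delta(\alpha)$ when collecting terms. One could equally well split off the leftmost factor, writing $\alpha^{n+1}=\alpha\cdot\alpha^{n}$ and using $\sigma(\alpha)=\alpha$, $\tau(\alpha^{n})=\alpha^{n}$; the computation is symmetric and yields the same conclusion.
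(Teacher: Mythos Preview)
Your proof is correct and follows essentially the same inductive argument as the paper's own proof; the only cosmetic difference is that the paper starts the base case at $n=2$ while you start at $n=1$, and you are slightly more explicit about why $\alpha$ commutes with $\delta(\alpha)$.
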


\begin{proof}
Let $\alpha\in\mathcal{Z}(\mathcal{A})$. For $n=2$, we have $$\delta\left(\alpha^2\right)=\delta(\alpha)\tau(\alpha)+\sigma(\alpha)\delta(\alpha)=\delta(\alpha)\alpha+\alpha\delta(\alpha)=2\alpha\delta(\alpha)$$. Let the result be true for some $n$, that is, $\delta\left(\alpha^n\right)=n\alpha^{n-1}\delta(\alpha).$
Then $$\delta\left(\alpha^{n+1}\right)=\delta\left(\alpha^n\right)\alpha+\alpha^n\delta\left(\alpha\right)=(n+1)\alpha^n\delta(\alpha).$$ Thus the result follows by induction.
\end{proof}

Recall Skolem-Noether Theorem (for example, Theorem $4.3.1$, \cite{H}).

\begin{thm}[Skolem-Noether]\label{skolem-noether} Let $R$ be a simple Artinian ring with center $F$ and let $A,B$ be simple subalgebras of $R$ which contains $F$ and are finite dimensional over it. If $\phi$ is an isomorphism of $A$ onto $B$ leaving $F$ elementwise fixed, then there is an invertible $x\in R$ such that $\phi(a)=xax^{-1}$ for all $a\in A$.
\end{thm}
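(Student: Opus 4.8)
The plan is to give the standard module-theoretic proof, carried out inside the unique simple module of $R$. First, since $R$ is simple Artinian it has, up to isomorphism, exactly one simple left module $V$; put $\Delta=\operatorname{End}_R(V)$, a division ring. By the structure theory of simple Artinian rings (Wedderburn together with the double centralizer property), $V$ is finite dimensional as a right $\Delta$-vector space, $R=\operatorname{End}_\Delta(V)$, and a short check shows the center of $\Delta$ is exactly $F$. So we may identify the elements of $R$ with the $\Delta$-linear endomorphisms of $V$ and the units of $R$ with the $\Delta$-linear automorphisms of $V$.

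Next I would equip $V$ with two left $A$-module structures: the structure $V_1$ coming from the inclusion $A\hookrightarrow R$, and the structure $V_2$ in which $a\in A$ acts on $V$ the way $\phi(a)\in B\subseteq R$ does. In both cases the $A$-action commutes with the right $\Delta$-action, so $V_1$ and $V_2$ are left modules over the $F$-algebra $\Lambda:=A\otimes_{F}\Delta^{\mathrm{op}}$. The hypothesis that $\phi$ fixes $F$ elementwise is used exactly here: it forces $\phi(f)=f$ for $f\in F$, which is what makes the $\Lambda$-action on $V_2$ well defined, so that the $A$- and $\Delta$-actions agree on $F$. Since the right $\Delta$-module underlying $V_1$ and $V_2$ is in both cases just $V$, we get $\dim_\Delta V_1=\dim_\Delta V_2$.

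Then comes the heart of the argument: $V_1\cong V_2$ as $\Lambda$-modules. Because $A$ is simple and $\Delta^{\mathrm{op}}$ is central simple over $F$, the tensor product $\Lambda=A\otimes_F\Delta^{\mathrm{op}}$ is simple; and since $A$ is finite dimensional over $F$, $\Lambda$ is finitely generated, hence of finite length, as a right module over the division ring $\Delta^{\mathrm{op}}$, so $\Lambda$ is Artinian. Thus $\Lambda$ is simple Artinian, every $\Lambda$-module is a direct sum of copies of its unique simple module $S$, and writing $V_i\cong S^{\oplus m_i}$ and comparing $\Delta$-dimensions forces $m_1=m_2$, whence $V_1\cong V_2$. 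Finally, an isomorphism $\psi\colon V_1\to V_2$ of $\Lambda$-modules is in particular a $\Delta$-linear bijection of $V$, hence is given by left multiplication by some unit $x\in R=\operatorname{End}_\Delta(V)$; and $A$-linearity of $\psi$ says $x(av)=\phi(a)(xv)$ for all $a\in A$, $v\in V$, that is, $\phi(a)=xax^{-1}$, as required.

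I expect the main obstacle to be the claim in the third paragraph that $\Lambda=A\otimes_F\Delta^{\mathrm{op}}$ is simple Artinian. This rests on two inputs: that $\Delta$ has center exactly $F$, so that $\Delta^{\mathrm{op}}$ is central simple over $F$ (this is where $F=\mathcal{Z}(R)$ and the Wedderburn structure of $R$ are used), and the fact that the tensor product over $F$ of a simple $F$-algebra with a central simple $F$-algebra is again simple; finite dimensionality of $A$ over $F$ is what upgrades ``simple'' to ``simple Artinian.'' The remaining steps are routine bookkeeping with the two module structures.
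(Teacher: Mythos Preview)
The paper does not prove this statement: it is quoted verbatim as the classical Skolem--Noether theorem with a reference to Herstein (Theorem~4.3.1 of \cite{H}) and is then used as a black box in the proof of Proposition~\ref{csa-inner}. So there is no ``paper's own proof'' to compare against.

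That said, your argument is correct and is the standard module-theoretic proof. The key points all hold: $R$ simple Artinian gives a unique simple module $V$ with $R=\operatorname{End}_\Delta(V)$ and $\mathcal{Z}(\Delta)=\mathcal{Z}(R)=F$; the two $A$-actions on $V$ (via the inclusion and via $\phi$) both commute with the right $\Delta$-action and agree on $F$ precisely because $\phi$ fixes $F$, so they assemble to $\Lambda=A\otimes_F\Delta^{\mathrm{op}}$-module structures; $\Lambda$ is simple because a central simple factor forces the ideal correspondence $I\mapsto I\otimes_F\Delta^{\mathrm{op}}$, and it is Artinian because $\dim_F A<\infty$ makes $\Lambda$ a finite-length module over the division subring $1\otimes\Delta^{\mathrm{op}}$, so every one-sided ideal chain stabilizes; and then equality of $\Delta$-dimensions forces $V_1\cong V_2$ as $\Lambda$-modules, yielding the conjugating unit. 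One tiny cosmetic point: when you say ``left multiplication by some unit $x\in R$,'' it is cleaner to say that the $\Delta$-linear bijection $\psi$ \emph{is} an element $x$ of $R=\operatorname{End}_\Delta(V)$, and its invertibility in $R$ is exactly its bijectivity; the formula $\phi(a)=xax^{-1}$ then drops out of $A$-linearity as you wrote.
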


We apply the above Skolem-Noether theorem to prove the following result.

\begin{proposition}\label{csa-inner}Let $A$ be a finite dimensional central simple algebra with $1$ over a field $F$ (that is, $A$ is a simple algebra finite dimensional over $F$ such that $\mathcal{Z}(A)=F$). Let $\sigma$ and $\tau$ be non-zero $F$-algebra endomorphisms of $A$. Then there exist units $v_1,v_2$ in $A$ such that any $F$-linear $(\sigma,\tau)$-derivation $\delta$ of $A$ is equal to $v_1\delta_u^{inner}v_2$ for some $u\in A$.
\end{proposition}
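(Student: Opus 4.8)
The plan is to reduce a general $(\sigma,\tau)$-derivation to an ordinary inner derivation by absorbing $\sigma$ and $\tau$ into invertible elements supplied by Skolem--Noether. Since $A$ is central simple over $F$, any non-zero $F$-algebra endomorphism of $A$ is automatically injective (the kernel is a two-sided ideal, hence $0$), and since $A$ is finite dimensional it is therefore an $F$-algebra automorphism; moreover it fixes $\mathcal{Z}(A)=F$ elementwise because it is $F$-linear and sends $1$ to $1$. Applying Theorem~\ref{skolem-noether} with $R=A=A_1=B$ twice, I would first produce units $a,b\in A$ with $\sigma(x)=axa^{-1}$ and $\tau(x)=bxb^{-1}$ for all $x\in A$.

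Now substitute these descriptions into the defining identity $\delta(xy)=\delta(x)\tau(y)+\sigma(x)\delta(y)$. I want to find an $F$-linear map $\delta'$, built from $\delta$ by pre- and post-composition with the inner automorphisms above (equivalently, by left/right multiplication by the units $a,b$ and their inverses), so that $\delta'$ satisfies the ordinary Leibniz rule $\delta'(xy)=\delta'(x)y+x\delta'(y)$. The natural candidate is $\delta'(x)=a^{-1}\,\delta\bigl(b x b^{-1}\bigr)\,b$ or some close variant: the idea is that conjugating the argument by $b^{-1}$ turns $\tau$ applied to the argument into the argument itself, while the outer factors $a^{-1}(\,\cdot\,)b$ turn $\sigma(x)\delta(y)$ into $x\,\delta'(y)$ and turn $\delta(x)\tau(y)$ into $\delta'(x)\,y$. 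After choosing the right conjugation one checks, by a direct computation with the $(\sigma,\tau)$-Leibniz rule and the relations $\sigma(x)=axa^{-1}$, $\tau(x)=bxb^{-1}$, that $\delta'$ is an ordinary ($\mathrm{id},\mathrm{id}$)-derivation of $A$. By the classical corollary of Skolem--Noether (every derivation of a finite dimensional central simple algebra is inner --- the special case $\sigma=\tau=\mathrm{id}$, which itself follows from Theorem~\ref{skolem-noether} applied to the algebra $A\otimes_F F[\varepsilon]/(\varepsilon^2)$, or may simply be quoted), there is $u\in A$ with $\delta'=\delta_u^{inner}$, i.e. $\delta'(x)=ux-xu$. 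Unwinding the substitution expresses $\delta(x)$ as $v_1(u\,\tau(x)-\sigma(x)\,u)v_2 = v_1\,\delta_u^{inner}(x)\,v_2$ after the appropriate renaming (here $v_1,v_2$ are the units $a$, $b^{-1}$ or their inverses, read off from the conjugation used to define $\delta'$), which is exactly the asserted form $\delta=v_1\,\delta_u^{inner}\,v_2$.

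The steps, in order, are: (1) show every non-zero $F$-algebra endomorphism of $A$ is an automorphism fixing $F$; (2) invoke Skolem--Noether to write $\sigma,\tau$ as conjugations by units $a,b$; (3) define $\delta'$ by the appropriate two-sided-multiplication-and-conjugation twist of $\delta$ and verify it is an ordinary derivation; (4) apply the classical ``derivations of a central simple algebra are inner'' result to get $\delta'=\delta_u^{inner}$; (5) untwist to recover $\delta=v_1\,\delta_u^{inner}\,v_2$ and identify $v_1,v_2$.

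The main obstacle is step (3): getting the twist exactly right so that both terms on the right of the $(\sigma,\tau)$-Leibniz rule convert simultaneously into the two terms of the ordinary Leibniz rule. The two conjugations (by $a$ for $\sigma$, by $b$ for $\tau$) act on opposite sides of the product, so the candidate $\delta'$ must be chosen to reconcile them; I expect one specific choice among a small finite list of natural candidates to work, and pinning down that choice — together with checking $F$-linearity and that $v_1,v_2$ come out as genuine units — is where the real care is needed. Everything else is either standard structure theory or a bookkeeping substitution.
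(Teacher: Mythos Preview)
Your approach is correct and genuinely different from the paper's. The paper does not first reduce to an ordinary derivation; instead it works directly in $M_2(A)$, embedding $A$ diagonally as $C=\{\operatorname{diag}(a,a)\}$ and mapping it to $B=\{\begin{smallmatrix}\sigma(a)&\delta(a)\\0&\tau(a)\end{smallmatrix}\}$, then applies Skolem--Noether inside $M_2(A)$ to obtain a conjugating $2\times 2$ matrix, and only afterwards invokes Skolem--Noether for $\sigma,\tau$ separately to unpack the resulting entrywise equations. Your route---first writing $\sigma(x)=axa^{-1}$, $\tau(x)=bxb^{-1}$, then twisting $\delta$ into an ordinary derivation, then quoting the classical inner-derivation theorem---is shorter and more modular, at the cost of treating ``derivations of a central simple algebra are inner'' as a black box (whose usual proof is precisely the $M_2(A)$ trick the paper carries out in the general case).

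Two small points. First, the twist you are groping for is simpler than your candidate: set $\delta'(x)=a^{-1}\delta(x)\,b$ with no conjugation of the argument. Multiplying $\delta(xy)=\delta(x)\,byb^{-1}+axa^{-1}\delta(y)$ on the left by $a^{-1}$ and on the right by $b$ gives $\delta'(xy)=\delta'(x)\,y+x\,\delta'(y)$ immediately. Second, in your unwinding you wrote $v_1(u\,\tau(x)-\sigma(x)\,u)v_2$; that is not what comes out. From $a^{-1}\delta(x)b=ux-xu$ you get $\delta(x)=a(ux-xu)b^{-1}=v_1\,\delta_u^{inner}(x)\,v_2$ with $v_1=a$, $v_2=b^{-1}$, so $\sigma,\tau$ do not reappear inside the bracket.
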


\begin{proof}Let $\delta$ be an $F$-linear $(\sigma,\tau)$-derivation of $A$. As $\sigma,\tau$ are non-zero endomorphisms on $A$, by Schur's Lemma, they are actually isomorphisms on $A$. Let $A_2$ be the ring of $2\times2$ matrices over $A$. Then $A_2$ is simple and finite dimensional over $F$ with dimension $4\times[A:F]$. Let
$$B=\left\{
\arraycolsep=2.5pt\def\arraystretch{1}\left(\begin{array}{cc}
                                                                  \sigma(a) & \delta(a) \\
                                                                          0 & \tau(a)
                                                                \end{array}\right)
\middle|a\in A\right\}\qquad\text{and}\qquad C=\left\{\arraycolsep=2.5pt\def\arraystretch{1}\left(\begin{array}{cc}
                                                                  a & 0\\
                                                                          0 & a
                                                                \end{array}\right)
\middle|a\in A\right\}.$$

Then $B$ and $C$ are simple subalgebras of $A_2$. Define a mapping $\Psi:C\rightarrow B$ such that $$\Psi\arraycolsep=2.5pt\def\arraystretch{1}\left(\begin{array}{cc}
                                                                  a & 0\\
                                                                          0 & a
                                                                \end{array}\right)=\arraycolsep=2.5pt\def\arraystretch{1}\left(\begin{array}{cc}
                                                                  \sigma(a) & \delta(a) \\
                                                                          0 & \tau(a)
                                                                \end{array}\right).$$

As $\sigma,\tau$ are $F$-algebra homomorphisms and $\delta$ is additive and $F$-linear, we get $\Psi$ is additive and $F$-linear as well. Also since $\delta(ab)=\delta(a)\tau(b)+\sigma(a)\delta(b)$, we have
\begin{eqnarray*}\Psi\arraycolsep=2.5pt\def\arraystretch{1}\left(\begin{array}{cc}
                                                                  a & 0\\
                                                                          0 & a
                                                                \end{array}\right)\Psi\arraycolsep=2.5pt\def\arraystretch{1}\left(\begin{array}{cc}
                                                                  b & 0\\
                                                                          0 & b
                                                                \end{array}\right)&=&\arraycolsep=2.5pt\def\arraystretch{1}\left(\begin{array}{cc}
                                                                  \sigma(a) & \delta(a) \\
                                                                          0 & \tau(a)
                                                                \end{array}\right)\arraycolsep=2.5pt\def\arraystretch{1}\left(\begin{array}{cc}
                                                                  \sigma(b) & \delta(b) \\
                                                                          0 & \tau(b)
                                                                \end{array}\right)\\
                                                                &=&\arraycolsep=2.5pt\def\arraystretch{1}\left(\begin{array}{cc}
                                                                  \sigma(ab) & \delta(ab) \\
                                                                          0 & \tau(ab)
                                                                \end{array}\right)
                                                                =\Psi\left(\arraycolsep=2.5pt\def\arraystretch{1}\left(\begin{array}{cc}
                                                                  a & 0\\
                                                                          0 & a
                                                                \end{array}\right)\arraycolsep=2.5pt\def\arraystretch{1}\left(\begin{array}{cc}
                                                                  b & 0\\
                                                                          0 & b
                                                                \end{array}\right)\right).\end{eqnarray*}
Thus, $\Psi$ is multiplicative as well. Now, since $\sigma,\;\tau$ are $F$-algebra endomorphisms and so $\sigma(\alpha)=\alpha\sigma(1)=\alpha\tau(1)=\tau(\alpha)=\alpha$ and $\delta(\alpha)=\alpha\delta(1)=0$ for $\alpha\in F$, we get $\Psi$ fixes $F$ elementwise. Also, $\Psi$ is one-one and onto as $\sigma$ and $\tau$ are bijections. Thus $\Psi$ is an isomorphism of $C$ onto $B$ leaving $F$ elementwise fixed. Also, $C$ is isomorphic to $A$, which is simple. So we have $A_2$ is a simple Artinian ring (as it is finite dimensional over its center) with $\mathcal{Z}(A_2)=F$ and $A(\cong C)$, $B$ are simple subalgebras of $A_2$ that are finite dimensional over $F$. Also $\Psi$ is an isomorphism of $A(\cong C)$ onto $B$ leaving $F$ elementwise fixed. Therefore by Theorem \ref{skolem-noether}, there exists an invertible matrix $\left(\begin{smallmatrix}
                                                                                          x & y \\
                                                                                          z & w \\
                                                                                        \end{smallmatrix}\right)\in A_2$ such that
$$
\Psi\arraycolsep=2.5pt\def\arraystretch{1}\left(\begin{array}{cc}
                                                                  a & 0\\
                                                                          0 & a
                                                                \end{array}\right)=\arraycolsep=2.5pt\def\arraystretch{1}\left(\begin{array}{cc}
                                                                  x & y\\
                                                                          z & w
                                                                \end{array}\right)\arraycolsep=2.5pt\def\arraystretch{1}\left(\begin{array}{cc}
                                                                  a & 0\\
                                                                          0 & a
                                                                \end{array}\right){\arraycolsep=2.5pt\def\arraystretch{1}\left(\begin{array}{cc}
                                                                  x & y\\
                                                                          z & w
                                                                \end{array}\right)}^{-1}.
$$ That is,
$$\arraycolsep=2.5pt\def\arraystretch{1}\left(\begin{array}{cc}
                                                                  \sigma(a) & \delta(a) \\
                                                                          0 & \tau(a)
                                                                \end{array}\right)\arraycolsep=2.5pt\def\arraystretch{1}\left(\begin{array}{cc}
                                                                  x & y\\
                                                                          z & w
                                                                \end{array}\right)=\arraycolsep=2.5pt\def\arraystretch{1}\left(\begin{array}{cc}
                                                                  x & y\\
                                                                          z & w
                                                                \end{array}\right)\arraycolsep=2.5pt\def\arraystretch{1}\left(\begin{array}{cc}
                                                                  a & 0\\
                                                                          0 & a
                                                                \end{array}\right).$$
Solving, we get the following set of equations:
\begin{eqnarray}
\label{eqn1}\sigma(a)x+\delta(a)z&=&xa\\
\sigma(a)y+\delta(a)w&=&ya\\
\label{eqn3}\tau(a)z&=&za\\
\tau(a)w&=&wa
\end{eqnarray}
for all $a\in A$. Note that the above equations are symmetric in $w$ and $z$. Since we cannot have $z=w=0$ as then $\left(\begin{smallmatrix}
                                                                                          x & y \\
                                                                                          z & w \\
                                                                                        \end{smallmatrix}\right)$ will not be invertible, we assume one of them is non-zero, say, $z$. As $\sigma,\tau$ are automorphisms on the central simple algebra $A$, by Skolem-Noether Theorem \ref{skolem-noether}, there exist units $s,t$ in $A$ such that $\sigma(a)=s^{-1}as$ and $\tau(a)=t^{-1}at$ for all $a\in A$. From equation \ref{eqn3} we get $t^{-1}atz=za$, that is, $atz=tza$ for all $a\in A$. This implies $tz\in\mathcal{Z}(A)=F$. Let us write $z=t^{-1}\alpha$ for some $\alpha\in F$. Hence, $z^{-1}$ exists. Then from the equation \ref{eqn1}  we get for all $a\in A$,
\begin{eqnarray}
\delta(a)&=&xaz^{-1}-\sigma(a)xz^{-1} \label{eqn delta}\\
\nonumber &=&xa\alpha^{-1}t-s^{-1}asx\alpha^{-1}t\\
\nonumber &=&\alpha^{-1}\left(xat-s^{-1}asxt\right)\\
\nonumber &=&\alpha^{-1}s^{-1}(sxa-asx)t\\
\nonumber &=&v_1(ua-au)v_2
\end{eqnarray} where $v_1=\alpha^{-1}s^{-1},\;v_2=t$ and $u=sx$. So there exist units $v_1,v_2$ in $A$ such that $\delta(a)=v_1\delta_u^{inner}(a)v_2$ for all $a\in A$, where $\delta^{inner}_u$ is the inner derivation of $A$ coming from $u$. Hence proved.
\end{proof}

Another version of the above result is as follows:

\begin{corollary}\label{csa-sigma-inner}Let $A$ be a finite dimensional central simple algebra with $1$ over a field $F$. Let $\sigma$ and $\tau$ be non-zero $F$-algebra endomorphisms of $A$. Then any $F$-linear $(\sigma,\tau)$-derivation of $A$ is a $(\sigma,\tau)$-inner derivation of $A$.
\end{corollary}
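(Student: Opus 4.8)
The plan is to reuse the computation carried out in the proof of Proposition~\ref{csa-inner} and to \emph{absorb} the two stray units $v_{1},v_{2}$ produced there by writing $\sigma$ and $\tau$ explicitly as inner automorphisms. Concretely, I would keep from that proof the Skolem--Noether representations $\sigma(a)=s^{-1}as$ and $\tau(a)=t^{-1}at$ for suitable units $s,t\in A$, together with the fact that the entry $z$ of the intertwining matrix is nonzero (up to interchanging $z$ and $w$) and satisfies $z=t^{-1}\alpha$ for some $\alpha\in F^{\times}$, so that $z^{-1}=\alpha^{-1}t$.

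From equation~\ref{eqn delta} we then have, for all $a\in A$,
\begin{equation*}
\delta(a)=xaz^{-1}-\sigma(a)xz^{-1}=\alpha^{-1}xat-\alpha^{-1}\sigma(a)xt .
\end{equation*}
The key move is to rewrite $at$ in terms of $\tau$: from $\tau(a)=t^{-1}at$ we get $at=t\tau(a)$, hence $\delta(a)=\alpha^{-1}xt\,\tau(a)-\alpha^{-1}\sigma(a)xt$. Since $\alpha\in F=\mathcal{Z}(A)$, the scalar $\alpha^{-1}$ commutes with $\sigma(a)$, so setting $w_{0}:=\alpha^{-1}xt\in A$ yields $\delta(a)=w_{0}\tau(a)-\sigma(a)w_{0}=\delta_{w_{0}}(a)$ for all $a\in A$; that is, $\delta$ is the $(\sigma,\tau)$-inner derivation of $A$ coming from $w_{0}$. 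If instead $z=0$ and $w\neq0$ (in the notation of Proposition~\ref{csa-inner}), the identical argument applies after interchanging the roles of the second and fourth of the displayed equations there, the system being symmetric in $z$ and $w$.

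I do not anticipate a genuine obstacle here: the substance is already contained in Proposition~\ref{csa-inner}, and what remains is bookkeeping — observing that $\alpha$ is forced to be a unit of $F$ by invertibility of the intertwining matrix, that $\alpha^{-1}$ may be pulled past $\sigma(a)$ because it is central, and that $w_{0}$ need not itself be a unit, since the definition of a $(\sigma,\tau)$-inner derivation only asks for an element of $A$. Equivalently, one may start directly from the conclusion $\delta=v_{1}\delta_{u}^{inner}v_{2}$ of Proposition~\ref{csa-inner} and expand $v_{1}(ua-au)v_{2}$ using $v_{1}=\alpha^{-1}s^{-1}$, $v_{2}=t$, $u=sx$; the relations $as=s\sigma(a)$ and $at=t\tau(a)$ then collapse the expression to $w_{0}\tau(a)-\sigma(a)w_{0}$ with the same $w_{0}=\alpha^{-1}xt$.
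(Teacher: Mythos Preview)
Your proposal is correct and follows essentially the same route as the paper: both start from equation~\ref{eqn delta}, use the identity $at=t\tau(a)$ coming from $\tau(a)=t^{-1}at$, and collapse $xaz^{-1}-\sigma(a)xz^{-1}$ to a single $(\sigma,\tau)$-inner derivation. Your element $w_{0}=\alpha^{-1}xt$ is exactly the paper's $u=xz^{-1}$ (since $z^{-1}=\alpha^{-1}t$ and $\alpha$ is central), so the two arguments coincide up to the order in which the central scalar is moved.
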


\begin{proof}
Following the same steps as Proposition \ref{csa-inner} and taking $\tau(a)=t^{-1}at$ for some invertible $t$, we get finally as in equation \ref{eqn delta},  $\delta(a)=xaz^{-1}-\sigma(a)xz^{-1}$ for all $a\in A$ and where $z=t^{-1}\alpha$, $\alpha\in\mathcal{Z}(A)=F$. We can write for all $a\in A$:

\begin{eqnarray*}
\delta(a)&=&xtt^{-1}att^{-1}z^{-1}-\sigma(a)xz^{-1}\\
&=&xt\tau(a)t^{-1}z^{-1}-\sigma(a)xz^{-1}\\
&=&xt\tau(a)\alpha^{-1}zz^{-1}-\sigma(a)xz^{-1}\\
&=&xz^{-1}\tau(a)-\sigma(a)xz^{-1}\\
&=&u\tau(a)-\sigma(a)u\\
&=&\delta_u(a)
\end{eqnarray*} where $u=xz^{-1}$. Thus a $(\sigma,\tau)$-derivation $\delta$ of $A$ is a $(\sigma,\tau)$-inner derivation of $A$. Hence proved.
\end{proof}

%%%%%%%%%%%%%%%%%%%%%%%%%%%%%%%%% GROUP RINGS %%%%%%%%%%%%%%%%%%%%%%%

Let us prove some results regarding $(\sigma,\tau)$- derivations on group rings.

\begin{lemma}\label{extension}Let $R$ be a commutative ring with $1$. For algebra homomorphisms $\sigma,\tau: RG\rightarrow RG$, if a map $\delta:G\rightarrow RG$ is such that $\delta(gh)=\delta(g)\tau(h)+\sigma(g)\delta(h)$ for all $g,h\in G$, then $\delta$ extends linearly to a $(\sigma,\tau)$-derivation of $RG$.
\end{lemma}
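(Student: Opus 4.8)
The plan is to take the map $\delta:G\to RG$ and define its $R$-linear extension to $RG$ in the obvious way, then verify that this extension satisfies the $(\sigma,\tau)$-derivation identity on all of $RG$ by reducing to the given identity on group elements. Concretely, for $\alpha=\sum_{g\in G}a_g g\in RG$ set $\delta(\alpha)=\sum_{g\in G}a_g\,\delta(g)$. This is well-defined and $R$-linear by construction (the group elements form an $R$-basis of $RG$), and it extends the original map on $G$.

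The main verification is that $\delta(\alpha\beta)=\delta(\alpha)\tau(\beta)+\sigma(\alpha)\delta(\beta)$ for all $\alpha,\beta\in RG$. First I would write $\alpha=\sum_g a_g g$ and $\beta=\sum_h b_h h$ and expand $\alpha\beta=\sum_{g,h}a_g b_h\,(gh)$. Applying the (already-defined) extension and then the hypothesis on group elements gives
\begin{equation*}
\delta(\alpha\beta)=\sum_{g,h}a_g b_h\,\delta(gh)=\sum_{g,h}a_g b_h\bigl(\delta(g)\tau(h)+\sigma(g)\delta(h)\bigr).
\end{equation*}
Then I would split the double sum into two pieces and recombine each using $R$-bilinearity of multiplication in $RG$ together with the fact that $\sigma$ and $\tau$ are $R$-algebra homomorphisms, hence $R$-linear: $\sum_{g,h}a_g b_h\,\delta(g)\tau(h)=\bigl(\sum_g a_g\delta(g)\bigr)\bigl(\sum_h b_h\tau(h)\bigr)=\delta(\alpha)\tau(\beta)$, and similarly $\sum_{g,h}a_g b_h\,\sigma(g)\delta(h)=\sigma(\alpha)\delta(\beta)$. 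Here one uses that $\tau(\beta)=\tau\bigl(\sum_h b_h h\bigr)=\sum_h b_h\tau(h)$ and likewise for $\sigma$, which is exactly $R$-linearity of the algebra homomorphisms.

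There is no real obstacle here; the lemma is essentially a "bilinear extension" bookkeeping argument, and the only thing to be careful about is that all the coefficients $a_g,b_h$ are scalars in the commutative ring $R$ and commute past everything, and that $\sigma,\tau$ being $R$-\emph{algebra} maps (not merely additive) is what lets us pull the sum out of $\sigma(\alpha)$ and $\tau(\beta)$. One should also remark at the start that $\delta(1)=0$ is not needed as a separate hypothesis since $1$ is the identity group element and the identity $\delta(1\cdot 1)=\delta(1)\tau(1)+\sigma(1)\delta(1)=2\delta(1)$ forces it (using $\sigma(1)=\tau(1)=1$), though this is not strictly required for the statement. I would present the computation in a single displayed chain of equalities and conclude that $\delta$ is a $(\sigma,\tau)$-derivation of $RG$.
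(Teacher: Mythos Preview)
Your proof is correct and follows essentially the same approach as the paper: define the $R$-linear extension on the basis $G$, then verify the $(\sigma,\tau)$-derivation identity by expanding a product of two generic elements, applying the hypothesis on group elements, and regrouping using the $R$-linearity of $\sigma$ and $\tau$. The only differences are cosmetic (the paper names the extension $\Delta$ rather than reusing $\delta$, and omits your side remark about $\delta(1)=0$).
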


\begin{proof} Define $\Delta:RG\rightarrow RG$ such that $$\Delta\left(\sum_{g\in G}r_gg\right)=\sum_{g\in G}r_g\delta(g),$$ where $r_g\in R$. Then $\Delta$ is $R$-linear. For $\sum\limits_{g\in G}r_gg,\sum\limits_{h\in G}s_hh\in RG$,
\begin{eqnarray*}
\Delta\left(\sum_{g\in G}r_gg\sum_{h\in G}s_hh\right)&=&\Delta\left(\sum_{g,h\in G}r_gs_hgh\right)\\
&=&\sum_{g,h\in G}r_gs_h\delta(gh)=\sum_{g,h\in G}r_gs_h\big(\delta(g)\tau(h)+\sigma(g)\delta(h)\big)\\
&=&\sum_{g\in G}r_g\delta(g)\sum_{h\in G}s_h\tau(h)+\sum_{g\in G}r_g\sigma(g)\sum_{h\in G}s_h\delta(h)\\
&=&\Delta\left(\sum_{g\in G}r_gg\right)\tau\left(\sum_{h\in G}s_hh\right)+\sigma\left(\sum_{g\in G}r_gg\right)\Delta\left(\sum_{h\in G}s_hh\right).
\end{eqnarray*}
Hence, $\Delta$ is a $(\sigma,\tau)$-derivation of $RG$.
\end{proof}

The following remark is an analogue of the necessary and sufficient conditions for an element being central in a group ring for the $(\sigma,\tau)$-case.

\begin{remark}\label{sigma-commutator}Let $R$ be a commutative ring with $1$ and $\sum\limits_{g\in G}a_gg\in RG$, where $a_g\in R$. Let the algebra homomorphisms $\sigma,\tau: RG\rightarrow RG$ be such that they are $R$-linear extensions of group homomorphisms of $G$ and they fix $\mathcal{Z}(RG)$ elementwise. We obtain the necessary and sufficient condition for an element $a=\sum\limits_{g\in G}a_gg\in RG$ to satisfy the condition $a\tau(b)=\sigma(b)a$ for every $b\in RG$. Note that $\sigma(h)^{-1}g\tau(h)$ is a group element for $g,h\in G$. Thus for any $h\in G$, we get:
\begin{alignat*}{2}
&\quad&\left(\sum\limits_{g\in G}a_gg\right)\tau(h)&=\sigma(h)\left(\sum\limits_{g\in G}a_gg\right)\\
&\Longleftrightarrow &\sum\limits_{g\in G}a_g\sigma(h)^{-1}g\tau(h)&=\sum\limits_{g\in G}a_gg\\
%\Longleftrightarrow\sum\limits_{g\in G}a_gg&=&\sum_{g\in G}a_g\sigma(h)gh^{-1}\\
&\Longleftrightarrow \quad &a_g&=a_{\sigma(h)^{-1}g\tau(h)}.
\end{alignat*}
\end{remark}

 %%%%%%%%%%%%%%%%%%%%%%%%%%%%%%%%%%%% PROOF OF MAIN THEOREM %%%%%%%%%%%%%%%%%%%%%%%%%%%%%%%%%%%%%%

\section{Proof of Theorem \ref{main}}

Finally, we are now in a position to prove our main theorem. Let $RG$ be the group ring of a finite group $G$ over an integral domain $R$ with $1$ of characteristic $p$ such that $p$ does not divide the order of $G$.\\

\subsection{Proof of part $(i)$}
 
Let $R$ be a field and $\sigma$ and $\tau$ be algebra endomorphisms on $RG$ such that they fix the center of $RG$ elementwise. Let $\delta$ be a $(\sigma,\tau)$-derivation of the group ring $RG$. We need to show that $\delta$ is a $(\sigma,\tau)$-inner derivation of $RG$. \\

  As $G$ is finite and the characteristic of $R$ does not divide the order of $G$, we have $RG$ is semisimple. So by Wedderburn Structure Theorem, $RG$ can be written as a sum of simple algebras in the following way:
$$RG\cong RGe_1\oplus RGe_2\oplus\cdots\oplus RGe_k$$ where
\begin{enumerate}[(i)]
  \item $e_1,e_2,\ldots,e_k$ is a unique collection of idempotents in $RG$ such that they form a central primitive decomposition of $1$.
  \item $RGe_i$ is simple and $RGe_i\cong M_{n_i}(D_i)$ where $D_i$ is a finite dimensional division algebra over $D$ for all $1\leq i\leq k$, and
  \item $\mathcal{Z}(RGe_i)\cong\mathcal{Z}(M_{n_i}(D_i))\cong\mathcal{Z}(D_i)=F_i$, say, such that $F_i$ is a finite dimensional field extension over $R$ for all $1\leq i\leq k$.
\end{enumerate}
Now, let $i,j$ be distinct integers with $1\leq i,j\leq k$. As $e_ie_j=0$, we have
\begin{eqnarray*}
0=\delta(e_ie_j)&=&\delta(e_i)\tau(e_j)+\sigma(e_i)\delta(e_j)\\
&=&\delta(e_i)e_j+e_i\delta(e_j)
\end{eqnarray*}
as $\sigma,\tau$ fix all central elements. Multiplying with $e_j$ from the left, we get $\delta(e_i)e_j=0$ as $e_j$ is a central idempotent and $e_je_i=0$. Thus for each $j$ such that $1\leq j\leq k$ and $i\neq j$, we get $\delta(e_i)e_j=0$, that is, $\delta(e_i)$ is orthogonal to $e_j$. Therefore, we must have $\delta(e_i)\in RGe_i$. Now, if $\alpha\in RG$, then as $\tau(e_i)=e_i$, we have $$\delta(\alpha e_i)=\delta(\alpha)e_i+\sigma(\alpha)\delta(e_i)\in RGe_i.$$
 So we get $\delta(RGe_i)\subseteq RGe_i$. Let us denote $\delta,\;\sigma$ and $\tau$ restricted to $KGe_i$ as $\delta_i,\;\sigma_i$ and $\tau_i$ respectively. Then $\delta_i:RGe_i\rightarrow RGe_i$ is a $(\sigma_i,\tau_i)$-derivation on $RGe_i$. \par
 Now, let $\alpha\in F_i=\mathcal{Z}(D_i)$. Since $\alpha$ is algebraic over $R$, $\alpha$ satisfies a unique monic irreducible polynomial of minimal degree in $R[x]$. Let the polynomial be $p(x)=x^n+a_{n-1}x^{n-1}+\cdots+a_0$. Then $p(\alpha)=\alpha^n+a_{n-1}\alpha^{n-1}+\cdots+a_0=0$. Recall that $\delta(1)=0$. Applying $\delta_i$ to this, we get using Lemma \ref{induction}
$$n\alpha^{n-1}\delta_i(\alpha)+a_{n-1}(n-1)\alpha^{n-2}\delta_i(\alpha)+\cdots+a_1\delta_i(\alpha)=0.$$ That is, $\left[D_x(p(x))\right]_{x=\alpha}\delta_i(\alpha)=0$, where $D_x(p(x))$ denotes the derivative of the polynomial $p(x)$. As $p(x)$ is a minimal polynomial that $\alpha$ satisfies, $\left[D_x(p(x))\right]_{x=\alpha}$ can never be zero. Also it belongs to $F_i$ and so is invertible. This gives us $\delta_i(\alpha)=0$.\par

 Thus if $\beta\in RGe_i$ and $\alpha\in\mathcal{Z}(RGe_i)=F_i$, then $$\delta_i(\alpha\beta)=\delta_i(\alpha)\tau_i(\beta)+\sigma_i(\alpha)\delta_i(\beta)=\sigma_i(\alpha)\delta_i(\beta)=\alpha\delta_i(\beta).$$ Thus, $\delta_i$ is a $(\sigma_i,\tau_i)$-derivation of the finite dimensional simple algebra $RGe_i$ and is linear over $\mathcal{Z}(RGe_i)=F_i$. Hence by Corollary \ref{csa-sigma-inner}, $\delta_i$ is a $(\sigma_i,\tau_i)$-inner derivation of $RGe_i$. Hence by the definition of a $(\sigma_i,\tau_i)$-inner derivation, there exists $x_i\in RGe_i$ such that $\delta_i(a)=\delta_{x_i}(a)=x_ia-ax_i$ for all $a\in RGe_i$. That is, $\delta_i=\delta_{x_i}$. For each $i$, $1\leq i\leq k$, there exists some $x_i\in RGe_i$ such that $\delta_i=\delta_{x_i}$. Then $x=x_1+x_2+\cdots+x_k\in RG$. Then by Property \ref{inner sum}, $\delta=\delta_1+\delta_2+\cdots+\delta_k=\delta_{x_1}+\delta_{x_2}+\cdots+\delta_{x_k}=\delta_x$. Thus, $\delta$ is a $(\sigma,\tau)$-inner derivation on $RG$.\\
 
 \subsection{Proof of Part $(ii)$}
 
 Let $R$ be an integral domain with $1$ that is not a field and $\sigma,\;\tau$ are $R$-linear extensions of group homomorphisms of $G$ such that they fix $\mathcal{Z}(RG)$ elementwise. Let $K$ be the field of fractions of $R$, that is, $K$ is the smallest field containing $R$ such that every nonzero element of $R$ is a unit in $K$.\\
 
 Define $\sigma',\tau':KG\rightarrow KG$ such that $$\sigma'\left(\sum_{g\in G}r_gg\right)=\sum_{g\in G}r_g\sigma(g)$$ and $$\tau'\left(\sum_{g\in G}r_gg\right)=\sum_{g\in G}r_g\tau(g)$$ where $r_g\in K$. Then $\sigma',\tau'$ are $K$-algebra endomorphisms of $KG$. It is known that (see, for example, Lemma $3.6.4$, \cite{SS}), the center $\mathcal{Z}(RG)$ of a group ring $RG$ of a group $G$ over a commutative ring $R$ is the $R$-linear span of finite class sums of $G$, that is, $$\mathcal{Z}(RG)=\left\{\sum_{x\in G}a_xC_x:\;C_x=\sum_{y\in G,y\thicksim x}y,\;a_x\in R\right\},$$ where $y\thicksim x$ for $x,y\in G$ means $y$ is conjugate to $x$. Then, as $\sigma,\tau$ fix $\mathcal{Z}(RG)$ elementwise, we have \[\sigma'\left(\sum_{x\in G}a_xC_x\right)=\sum_{x\in G}a_x\sigma(C_x)=\sum_{x\in G}a_xC_x\] and \[\tau'\left(\sum_{x\in G}a_xC_x\right)=\sum_{x\in G}a_x\tau(C_x)=\sum_{x\in G}a_xC_x.\]
So $\sigma'$ and $\tau'$ fix $\mathcal{Z}(KG)$ elementwise too. Now define $\Delta:KG\rightarrow KG$ such that
$$\Delta\left(\sum_{g\in G}r_gg\right)=\sum_{g\in G}r_g\delta(g),$$ where $r_g\in K$. Then by Lemma \ref{extension}, $\Delta$ is a $(\sigma',\tau')$-derivation of $KG$. By part $(i)$ of Theorem \ref{main}, $\Delta$ is $(\sigma',\tau')$-inner derivation of $KG$. Let $x\in KG$ be such that $\Delta=\Delta_x$, that is, $\Delta(a)=x\tau'(a)-\sigma'(a)x$ for all $a\in KG$.

 Now, every finite integral domain is a field and the case has already been proved in part $(i)$. So let $R$ be an infinite integral domain that is not a field.\\

  Let the element $x$ as obtained above be of the form $x=\sum\limits_{g\in G}x_gg\in KG$, where $x_g\in K$. Let $h\in G$. Then $\Delta(h)=1\delta(h)\in RG$. But $\Delta(h)=\Delta_x(h)$. This means $$\Delta(h)=x\tau'(h)-\sigma'(h)x=x1\tau(h)-1\sigma(h)x=\sum\limits_{g\in G}x_gg\tau(h)-\sum\limits_{g\in G}x_g\sigma(h)g\in RG.$$ Recall that $\sigma$ and $\tau$ are $R$-linear extensions of group homomorphisms of $G$. Now, let $g_i\tau(h)=\sigma(h)g_{k_i}$, for some $g_i,g_{k_i}\in G$, say. Then $\sigma(h)^{-1}g_i\tau(h)=g_{k_i}$. Hence in the above equation, we have $\sum\limits_{g\in G}\left(x_g-x_{\sigma(h)^{-1}g\tau(h)}\right)g\tau(h)\in RG$. That is, $x_g-x_{\sigma(h)^{-1}g\tau(h)}\in R$, where $x_g,x_{\sigma(h)^{-1}g\tau(h)}\in K$. Note that $x_{\sigma(h)^{-1}g\tau(h)}$ is the coefficient of the group element $\sigma(h)^{-1}g\tau(h)$.\\

  Now, we want to find an element $u\in KG$ such that $x-u\in RG$ and $\Delta_x=\Delta_{x-u}$. Then we can conclude that $\Delta$ restricted to $RG$ which is $\delta$, is of the form $\delta_{x-u}$ and hence is a $(\sigma,\tau)$-inner derivation of $RG$. \\

   %Now let us denote the support of $x$ by $Supp(x)=\{g\;|\; x_g\neq0\}.$ Let $F=\{x_g\;|\;g\in Supp(x)\}$.  As $G$ is finite, we have $F$ is also finite. If $x_g\in F$ is such that $x_g\in R$, we define $\{x_g\}=0$. Fix $g_0\in Supp(x)$ such that $x_{g_0}\not\in R$.
% Define $F_{g_0}$ to be the subset of $F$ containing the collection of all those elements $x_{\sigma(h)^{-1}g_0\tau(h)}$ of $F$ such that $x_{g_0}-x_{\sigma(h)^{-1}g_0\tau(h)}\in R$ for every $h\in G$. That is, $F_{g_0}=\{x_{\sigma(h)^{-1}g_0\tau(h)}\quad|\quad x_{g_0}-x_{\sigma(h)^{-1}g_0\tau(h)}\in R,\quad h\in G\}$. For $h=1$, note that $x_{g_0}\in F_{g_0}$. As $R$ is infinite, hence so is $K$. Thus as $F_{g_0}$ is finite there exists $x'_{g_0}\in K$ such that $x'_{g_0}\not\in F_{g_0}$ and is such that $x_g+R=x'_{g_0}+R$ in the quotient ring $K/R$, for every $x_g\in F_{g_0}$. Define $\{x_{g_0}\}=x'_{g_0}$. Then $x_{g_0}-\{x_{g_0}\}\in R$. For every $g\in Supp(x)$ such that $x_g\not\in R$, we can define $\{x_g\}$ in a similar way such that $x_g-\{x_g\}\in R$ and $\{x_g\}=\{x_{\sigma(h)^{-1}g_0\tau(h)}\}$ for every $h\in G$.\\

 Let us denote the support of $x$ by $Supp(x)=\{g\;|\; x_g\neq0\}.$ Let $F=\{x_g\;|\;g\in Supp(x)\}$, that is $F$ is the collection of all those elements $x_g\in K$ which appear in the expression of $x$.  As $G$ is finite, we have $F$ is also finite. For $g\in Supp(x)$ we define $$F_{g}=\{x_{\sigma(h)^{-1}g\tau(h)}\in F\;|\;x_g-x_{\sigma(h)^{-1}g\tau(h)}\in R,\; h\in G\}.$$ That is, $F_g$ is the collection of all those elements of $F$ that appear as coefficients of $\sigma(h)^{-1}g\tau(h)$ for $h\in G$ and such that the difference of those elements with $x_g$ belongs to $R$. Note that $x_g$ itself belongs to $F_g$ as for $h=1$, $x_g-x_g=0\in R$. For every $g\in Supp(x)$, we can form the corresponding $F_g$ in this manner. Now, if $F_{g_i}\cap F_{g_j}\neq\emptyset$ for some $g_i,g_j\in Supp(x),\;g_i\neq g_j$, it follows that $F_{g_i}=F_{g_j}$. This is because, if $x_{g_0}\in F_{g_i}\cap F_{g_j}$, then $x_{g_0}-x_{g_i}\in R$ and $x_{g_0}-x_{g_j}\in R$ and this implies $x_{g_i}-x_{g_j}\in R$. Let $g_1,g_2,\cdots, g_t\in Supp(x)$ be the representatives of the distinct $F_g$'s. That is,  $F_{g_1},F_{g_2},\cdots,F_{g_t}$ are distinct subsets of $F$ which means $F_{g_i}\cap F_{g_j}=\emptyset$ for every $1\leq i,j\leq t,\;i\neq j.$ Note that $F=\bigcup_{i=1}^tF_{g_i}$. As R is infinite, so is $K$. Thus for each $F_{g_i}$, $1\leq i\leq t$, which is a finite subset of $K$ we can find an element $x'_{g_i}\in K$ such that $x'_{g_i}\not\in F_{g_i}$ and $x'_{g_i}+R=y+R$ in the abelian group $(K/R,+)$ for every $y\in F_{g_i}$. That is, $x'_{g_i}-y\in R$ for every $y\in F_{g_i}$. Fix this $x'_{g_i}$ for the corresponding $F_{g_i}$.\\

   Define the map: $$((\cdot)):F=\bigcup_{i=1}^{t}F_{g_i}\longrightarrow K$$ where
   $((F_{g_i}))=x'_{g_i}$, that is, $((x_g))=x'_{g_i}$ for every $x_g\in F_{g_i}$, where $x'_{g_i}$ is the element we fixed for $F_{g_i}$ such that $x'_{g_i}\not\in F_{g_i}$ and $x_g-x'_{g_i}\in R$ for every $x_g\in F_{g_i}$.

  %Hence we see that for $h\in G$ if $x_{g_0}-x_{\sigma(h)^{-1}g_0\tau(h)}\in R$, then we will have $\{x_{g_0}\}=\{x_{\sigma(h)^{-1}g_0\tau(h)}\}=x'_{g_0}$.
 %Thus for every $x_g\in \bigcup_{g\in Supp(x)}F_g$ we have defined $\{x_g\}$ such that $\{x_g\}=\{x_{\sigma(h)^{-1}g\tau(h)}\}$ where $h\in G$.
   Hence for each $x_g\in F$ we have found an element $((x_g))\in K$ such that whenever $x_g-x_{\sigma(h)^{-1}g\tau(h)}\in R$ for $h\in G$, we will have $((x_g))=((x_{\sigma(h)^{-1}g\tau(h)}))$.\\

    Now consider the element $u=\sum\limits_{g\in G}\left((x_g\right))g$ in $KG$. Note that by construction $x-u\in RG$ and for every $h\in G$, $\left((x_g\right))=\left((x_{\sigma(h)^{-1}g\tau(h)})\right)$. Then by Remark \ref{sigma-commutator}, for every $h\in G$ we get $\left(\sum\limits_{g\in G}\left((x_g\right))g\right)\tau(h)=\sigma(h)\left(\sum\limits_{g\in G}\left((x_g\right))g\right).$ We can write it as:

 $$
 \left(\sum_{g\in G}x_gg-\left(\sum_{g\in G}x_gg-\sum_{g\in G}\left((x_g\right))g\right)\right)\tau(h)
 =\sigma(h)\left(\sum_{g\in G}x_gg-\left(\sum_{g\in G}x_gg-\sum_{g\in G}\left((x_g\right))g\right)\right).
 $$

 The above equation can be written as $\left(x-(x-u)\right)\tau(a)=\sigma(a)\left(x-(x-u)\right)$ for every $a\in KG$. Hence, by Property \ref{sigma inner commutator}, we get $\Delta_x=\Delta_{x-u}$. Now, $x-u\in RG$. Thus $\Delta,\;\sigma',\;\tau'$ restricted to $RG$ gives $\delta,\;\sigma,\;\tau$ respectively and we get $$\delta(a)=\delta_{x-u}(a)=(x-u)\tau(a)-\sigma(a)(x-u),$$ for all $a\in RG$. Thus, $\delta$ is a $(\sigma,\tau)$-inner derivation of $RG$. Hence Theorem \ref{main} is proved. \\

 %%%%%%%%%%%%%%%%%%%%%%%%%%%%%%%%%%%%%%% CONCLUSION %%%%%%%%%%%%%%%%%%%%%%%%%%%%%%%%%%%

 \section{Conclusion}

\noindent Part $(ii)$ of our main theorem can be applied to the integral group ring $\Bbb{Z}G$ of a finite group $G$, with the same conditions on $\sigma$ and $\tau$. Thus we get the following result regarding $(\sigma,\tau)$-derivations of integral group rings which extends Theorem $1$ of \cite{Sp-94} to the $(\sigma,\tau)$-case.

\begin{corollary}
Let $G$ be a finite group and $\sigma,\tau$ be $\Bbb{Z}$-linear extensions of group homomorphisms of $G$ such that they fix $\mathcal{Z}(\mathbb{Z}G)$ elementwise. Then every $(\sigma,\tau)$-derivation of $\mathbb{Z}G$ is $(\sigma,\tau)$-inner.
\end{corollary}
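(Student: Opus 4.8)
The plan is to derive this corollary directly from Part $(ii)$ of Theorem \ref{main}, specialized to the case $R = \mathbb{Z}$. The first step is simply to verify that $\mathbb{Z}$ meets every hypothesis of that part: it is an integral domain with $1$, it is not a field, and its characteristic is $p = 0$. The condition "$p$ does not divide the order of $G$" then reads "$0 \nmid |G|$", which holds because $G$ is finite, so $|G| \geq 1$, while (under the standard convention) $0$ divides only $0$. Thus the characteristic condition is automatically satisfied for $R = \mathbb{Z}$ and every finite $G$.

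The second step is to observe that the hypotheses on $\sigma$ and $\tau$ in the corollary — that they are $\mathbb{Z}$-linear extensions of group homomorphisms of $G$ and that they fix $\mathcal{Z}(\mathbb{Z}G)$ elementwise — are exactly those imposed in Part $(ii)$. Hence, given any $(\sigma,\tau)$-derivation $\delta$ of $\mathbb{Z}G$, Theorem \ref{main}$(ii)$ applies verbatim and produces an element $x \in \mathbb{Z}G$ with $\delta(a) = x\tau(a) - \sigma(a)x$ for all $a \in \mathbb{Z}G$; that is, $\delta = \delta_x$ is $(\sigma,\tau)$-inner, which is the assertion. Setting $\sigma = \tau = \mathrm{id}$ recovers the statement that every derivation of $\mathbb{Z}G$ is inner, so the corollary genuinely extends Theorem $1$ of \cite{Sp-94}.

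There is essentially no obstacle here: all the substance lies inside Theorem \ref{main}. If a self-contained argument were wanted, the only thing to reproduce would be the descent step from the proof of Part $(ii)$ — pass from $\delta$ to the induced $(\sigma',\tau')$-derivation $\Delta$ of $\mathbb{Q}G$, apply Part $(i)$ to write $\Delta = \Delta_x$ with $x \in \mathbb{Q}G$, and then replace $x$ by $x - u$ with $u \in \mathbb{Q}G$ chosen as in that proof so that $x - u \in \mathbb{Z}G$ and $\Delta_x = \Delta_{x-u}$, invoking Remark \ref{sigma-commutator} and Property \ref{sigma inner commutator}. Since Theorem \ref{main} is already established, citing Part $(ii)$ suffices and I would do nothing further.
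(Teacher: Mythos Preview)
Your proposal is correct and matches the paper's own proof: the corollary is obtained by specializing Part~$(ii)$ of Theorem~\ref{main} to $R=\mathbb{Z}$, after checking that the characteristic condition is vacuous there. The paper likewise notes that a self-contained argument is available, the only variation being that for $\mathbb{Z}\subset\mathbb{Q}$ one can take $u=\sum_{g}\{x_g\}g$ using ordinary fractional parts in place of the more abstract choice of coset representatives used in the general proof of Part~$(ii)$.
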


\begin{proof}The result follows directly from our main theorem. Independently also one can prove this corollary by following the same steps as in the proof of our main theorem with a slightly different approach towards the end. Once we get the $(\sigma,\tau)$-derivation $\delta$ of $\Bbb{Z}G$ extended to $(\sigma',\tau')$-derivation $\Delta$ of $\Bbb{Q}G$ is a $(\sigma',\tau')$-inner derivation of $\Bbb{Q}G$ and is of the form $\Delta_x(a)=x\tau(a)-\sigma(a)x$ for all $a\in\Bbb{Q}G$, where $x=\sum_{g\in G}x_gg\in\Bbb{Q}G$, we can construct an element $u$ in $\Bbb{Q}G$ such that $\Delta_x=\Delta_{x-u}$ with $x-u\in \Bbb{Z}G$ by considering the `fractional part' $\{x_g\}$ of each $x_g$. By fractional part $\{x_g\}$ of $x_g$ we mean, $0\leq\{x_g\}<1$ and $\{x_g\}+\Bbb{Z}=x_g+\Bbb{Z}$ in the abelian group $(\Bbb{Q}/\Bbb{Z},+)$. For every $h\in G$ we will get $\sum\limits_{g\in G}\left(x_g-x_{\sigma(h)^{-1}g\tau(h)}\right)g\tau(h)\in \Bbb{Z}G$, that is, $x_g-x_{\sigma(h)^{-1}g\tau(h)}\in \Bbb{Z}$ with $x_g,x_{\sigma(h)^{-1}g\tau(h)}\in \Bbb{Q}$. So we must have $\left\{x_g\right\}=\left\{x_{\sigma(h)^{-1}g\tau(h)}\right\}.$ Then $u=\sum_{g\in G}\{x_g\}g$ will serve our purpose. Thus the $(\sigma,\tau)$-derivation $\delta$ of $\Bbb{Z}G$ with the given conditions on $\sigma$ and $\tau$ is a $(\sigma,\tau)$-inner derivation $\delta_{x-u}$ of $\Bbb{Z}G$.
\end{proof}

\noindent\emph{\textbf{Acknowledgements.}} The author is thankful to DAE (Government of India) and National Board for Higher Mathematics for providing fellowship with reference number $2/40(16)/2016/$ R\&D-II/$5766$. The author would also like to thank IISER Mohali for providing good research facilities when this project was carried out.\\

 %%%%%%%%%%%%%%%%%% REFERENCES
\bibliographystyle{alpha}
\bibliography{OneBibToRuleThemAll}

\end{document}